\documentclass[11pt,a4paper]{article}

\usepackage[utf8]{inputenc}
\usepackage[T1]{fontenc}
\usepackage[a4paper,margin=2.6cm]{geometry}

\usepackage{amsmath,amssymb,amsthm,amsfonts}
\usepackage{latexsym}
\usepackage{footnote}
\usepackage{multicol}
\usepackage{booktabs}
\usepackage[flushleft]{threeparttable}
\usepackage[font=small,labelfont=bf]{caption}
\usepackage[hidelinks]{hyperref}

\theoremstyle{plain}
\newtheorem{theorem}{Theorem}[section]
\newtheorem{corollary}{Corollary}[section]
\newtheorem{lemma}{Lemma}[section]

\theoremstyle{definition}
\newtheorem{definition}{Definition}[section]

\theoremstyle{remark}
\newtheorem{remark}{Remark}[section]

\makeatletter
\newcommand{\hinfo}[7]{}
\newcommand{\hinfoo}[2]{}

\newcommand{\coraut}{\textsuperscript{$\ast$}}
\def\@addr{}\def\@emails{}
\newcommand{\address}[1]{\gdef\@addr{#1}}
\newcommand{\emails}[1]{\gdef\@emails{#1}}
\newcommand{\subjclass}[1]{\par\medskip\noindent
  \textbf{2020 Mathematics Subject Classification.} #1\par}
\newcommand{\keywords}[1]{\par\smallskip\noindent
  \textbf{Keywords.} #1\par}

\renewcommand{\maketitle}{%
  \begin{center}
    {\LARGE\bfseries \@title \par}\vspace{1.1em}
    {\large \@author \par}\vspace{0.7em}
    {\small \@date \par}%
  \end{center}\vspace{0.6em}
}
\newcommand{\AuthorAddresses}{%
  \par\vspace{1.4em}\noindent\hrulefill\par\vspace{0.5em}
  {\footnotesize\noindent \@addr \par\vspace{0.4em}
   \textit{E-mail addresses:}\ \@emails \par\vspace{0.3em}
   \textsuperscript{$\ast$}\,\textit{Corresponding author.}\par}%
}
\makeatother
\date{\today}
\begin{document}

\markboth{H. Belbachir, H. Zeggada}{The distribution of the de Moivre experiment}

\title{The distribution of the de Moivre experiment}

\author{Hac\`ene Belbachir\coraut$^{1}$, Hamza Zeggada$^2$}

\address{$^1$USTHB, Faculty of Mathematics, RECITS Laboratory,\\ Po.~Box 32, El Alia 16111, Bab Ezzouar, Algiers, Algeria\\
	$^2$USTHB, Faculty of Mathematics, RECITS Laboratory,\\ Po.~Box 32, El Alia 16111, Bab Ezzouar, Algiers, Algeria\\}
\emails{hbelbachir@usthb.dz, hacenebelbachir@gmail.com (H.~Belbachir); hzeggada@usthb.dz (H.~Zeggada)}
\maketitle
\begin{abstract}
In this paper, we focus on the de Moivre random experiment, which allows us to introduce the $s$-Bernoulli distribution and the bi$^{s}$nomial distribution. We present some probabilistic properties such as the expectation, the variance, the skewness and kurtosis coefficients, the moments, the cumulants and the generating functions; the moments are expressed through extended Eulerian numbers and partial Bell polynomials, and the cumulants through logarithmic polynomials. Then we establish that, for an integer $s\geq 2$, the bi$^{s}$nomial distribution converges to a Poisson limiting distribution when the expectations converge, and that, for fixed parameters with $p\neq q$, it satisfies a central limit theorem as $n\rightarrow\infty$.
\end{abstract}
\subjclass{60C05, 60E05, 60F05, 11B65, 11B68, 05A15, 05A10.}
\keywords{The de Moivre experiment, bi$^{s}$nomial coefficients, $s$-Bernoulli distribution, bi$^{s}$nomial distribution, moment function, cumulants, Eulerian numbers, Bernoulli numbers, partial Bell polynomials, Poisson distribution, normal distribution.}

\section{Introduction}
In 1712, Abraham de Moivre discussed in \emph{De Mensura Sortis} \cite{Dem2} (see also \cite[p.~39--43]{Dem}), through different examples, the bi$^{s}$nomial coefficients. The main result appeared in a lemma which stated: \textit{``To find how many chances there are upon any number of dice, each of them of the same given number of faces, to throw any given number of points''} \cite[p.~39]{Dem}. If $n$ denotes the number of dice, $(s+1)$ the number of faces and $k$ the number of points, then this number of chances is denoted by $ {n \choose k}_s. $
Several extensions and studies have been investigated in the literature; for example, Bondarenko \cite{Bon} gave a combinatorial interpretation of bi$^{s}$nomial coefficients $ {n\choose k}_s $ as \textit{the number of different ways of distributing $ k $ balls among $ n $ cells where each cell contains at most $ s $ balls.} If we denote by $ x_i $ the number of balls in the $i$-th cell, the interpretation given by Bondarenko is equivalent to evaluating the number of solutions of the system
\begin{equation}
	\left\lbrace \begin{array}{ll}
		x_1+\cdots+x_n=k,\\
		0\leq x_1,\ldots,x_n\leq s.
	\end{array}\right. 
	\label{eq 1}
\end{equation}

Belbachir, Bouroubi and Khelladi \cite{BBe13} obtained an explicit relation with one summation symbol to compute the bi$^{s}$nomial coefficients:
\begin{equation}
	{n\choose k}_s = \sum_{j=0}^{\lfloor k/(s+1)\rfloor}(-1)^j {n\choose j} {k-j(s+1)+n-1\choose n-1}.
	\label{eq 2}
\end{equation}

In fact, de Moivre (see \cite{Dem2}) solved the system \eqref{eq 1} by giving the right hand side of \eqref{eq 2}.

This alternating expression is difficult to handle in probability, where expressions involving only positive terms are preferred. We therefore start from this problem, which interested the community at the time but could not be modeled, since simplified expressions became available only in the 2000s.

In the present work, the random experiment on which we build our distribution is de Moivre's experiment \cite{Dem2}, where he stated the following probability problem: ``there are $ n $ dice with $ (s + 1) $ faces; if these are thrown randomly, what would be the chance of the exhibited sum of numbers to be equal to $ k $?''

Since no such model was available at the time, the resulting probability distribution has, to the best of our knowledge, rarely been considered in the literature; a related construction, in the classical case $p+q=1$, was recently proposed by Atta \cite{Atta} under the name of $p$-nomial distribution. The case where the number of faces equals $2$, i.e.\ the coin toss problem, is the origin of the Bernoulli and binomial distributions. It corresponds to the modeling of a uniform distribution on $ \left\lbrace 0,1\right\rbrace $; the modeling of the uniform distribution on $ \left\lbrace 0,\ldots,s\right\rbrace  $, and of its exponential tilting, has rarely been developed in the literature. We propose in this paper to develop it, as well as the distribution laws associated with it. Let us point out that the one-die distribution introduced below is, up to reparametrization, a truncated geometric distribution on $\{0,\dots,s\}$; the novelty of the present approach lies in the parametrization $(p,q)$ under the normalization constraint, in the closed-form moment and cumulant identities through extended Eulerian numbers, logarithmic polynomials and partial Bell polynomials, and in the Poisson limit under the coupled constraint.

The experiment consists of tossing a die with $ (s+1) $ faces numbered by $\{0,1,\ldots,s\} $. Let $ p $ and $ q $ denote two real numbers of $]0,1[$, playing the roles of success and failure weights respectively and linked by the normalization constraint $\sum_{k=0}^{s}p^kq^{s-k}=1$; the scheme of the different possible outcomes is as follows:
\begin{equation}
	\begin{array}{|c|c|c|c|c|c|}
		\hline
		\mbox{Face ($ k $)}& 0 & 1 & 2&\cdots&s\\ \hline
		\text{Probability }(p_k)& q^s & p q^{s-1} & p^2q^{s-2}&\cdots&p^s\\ \hline
	\end{array}	
\end{equation}
Observe that $p_k$ is proportional to $(p/q)^k$: the law of a single face is a truncated geometric distribution of ratio $\theta=p/q$, and the family is a one-parameter exponential family.

The bi$^{s}$nomial distribution, like the classical binomial distribution, converges to Poisson and normal limiting distributions; in this work we provide proofs of both convergences.

The Poisson distribution was introduced in 1837 by Sim\'eon Denis Poisson, in his work \emph{Recherches sur la probabilit\'e des jugements en mati\`ere criminelle et en mati\`ere civile} \cite{Pois}. In particular, in Chapter 8, he obtained his distribution as the limit of the binomial distribution $ B (T, \lambda / T) $ when $ T $ becomes large, but moved on fairly quickly to other considerations; he did not study this limit distribution. In 1712, de Moivre had obtained the same distribution in \cite{Dem2}, still as the limit of the binomial distribution. This has prompted some authors \cite{Stig,Hald} to argue that the Poisson distribution should bear the name of de Moivre.

On the other hand, the first appearance of a normal distribution was in 1733 by Abraham de Moivre, who obtained it by deepening the study of the factorial $ n! $ while analyzing a coin toss game, in a supplement to his \emph{Miscellanea analytica} \cite{Dem1}; he later included it in \emph{The Doctrine of Chances} \cite{Dem}, in which a normal distribution appears as the limit of a binomial distribution, which would later be at the origin of the central limit theorem. In 1777, Pierre-Simon Laplace resumed this work and obtained a good approximation of the error between this normal distribution and a binomial distribution using Euler's gamma function \cite{Bru}.

A normal distribution was then fully defined when the first central limit theorem, then called Laplace's theorem, was stated by Laplace in 1812 \cite{Bru}. The name ``normal'' spread at the end of the nineteenth century; according to Stigler \cite{Stig}, it was introduced independently by Peirce, Galton and Lexis, and popularized in France by Henri Poincar\'e.

In what follows, after having presented the de Moivre experiment, we define in the next sections the $ s $-Bernoulli and bi$^{s}$nomial distributions respectively with their properties, including their moments and cumulants; then in the fifth and sixth sections we obtain the Poisson distribution and the normal distribution respectively as limiting cases of the bi$^{s}$nomial distribution.

\section{Bi$ ^s $nomial Coefficients}

The coefficients $ { n \choose k}_s $ were introduced first in the work of Abraham de Moivre \cite{Dem2} and later Euler \cite{Eul,Eul1} studied these coefficients and derived a number of properties; some results were developed by Euler using the construction of the $ s $-Pascal triangle, which is a natural generalization of the Pascal triangle. In 1876, Andr\'e \cite{And} studied these coefficients using combinations on words and explored several properties; he gave the absorption identity of bi$^{s}$nomial coefficients \cite{And}, which was generalized by Belbachir and Igueroufa \cite{Iguer}. The first rows of the $s$-Pascal triangles appear in the On-Line Encyclopedia of Integer Sequences \cite{Slo} as sequences A027907 ($s=2$) and A008287 ($s=3$).

These coefficients have appeared in history under different names. We can cite for example: generalized binomial coefficients in 1993 by Bondarenko \cite{Bon}, ordinary multinomial coefficients in 1997 by Warnaar \cite{Wa} and in 1998 by Ericksen \cite{Er}. In our work we use the bi$^{s}$nomial terminology, which was introduced in \cite{BBBen14}. Using the classical binomial coefficient, one has the de Moivre alternating summation \eqref{eq 2}.

\section{The $ s $-Bernoulli distribution}

We define a random variable $ X $ taking its values in $X(\Omega)= \{0,1,2,\ldots,s\} $. We call $ X $ a random variable following \textbf{the $ s $-Bernoulli distribution} with parameter $ p $, denoted by $ X\sim \mathcal{B}^{(s)}(p)$. We define the probability distribution of $ X $ as follows.
\subsection{Probability distribution}
The probability distribution is given by
\begin{equation}
	\mathbb{P}(X=k)=\left\{{\begin{array}{llll}p^s&\quad {\mbox{for }}k=s,\\p^{s-1}q&\quad {\mbox{for }}k=s-1,\\ \vdots\\q^s&\quad {\mbox{for }} k=0,\end{array}}\right. \label{eq:pmf-sB}
\end{equation}
with $ \sum_{k=0}^{s}\mathbb{P}(X=k)=1 $, that is,
\begin{equation}
	\frac{p^{s+1}-q^{s+1}}{p-q} =1,  \text{ equivalently }  p\left( p^s-1\right) =q\left( q^s-1\right),   \text{ for }  p\neq q, \label{eq:constraint}
\end{equation}
and
\begin{equation}
	p = (s+1)^{-1/s}, \text{ for } p=q.
\end{equation}
For every ratio $\theta=p/q>0$ there exists a unique admissible pair $(p,q)\in\,]0,1[^2$ satisfying \eqref{eq:constraint}.

\subsection{The generating function}
In the following theorem, we present the generating function of the $ s $-Bernoulli distribution.
\begin{theorem}\label{thm:gf-sB}
	Let $ X $ be a random variable that follows an $s$-Bernoulli distribution with parameter $ p $, $ X \sim \mathcal {B}^{(s)} (p) $. Then the probability generating function of $X$ is, for $ p\neq q $,
	\begin{equation}
		G_X(t)=E(t^X) =  \sum_{k=0}^{s} t^k p^k q^{s-k} =\frac{(pt)^{s+1}-q^{s+1}}{pt-q} \qquad (pt\neq q,\ \text{removable singularity}),\label{eq:5}
	\end{equation}
	and
	\begin{equation}
		G_X(t)=\frac{1}{s+1}\left(\frac{1-t^{s+1}}{1-t}\right), \qquad\text{ for } p=q. \label{eq:5.1}
	\end{equation}
\end{theorem}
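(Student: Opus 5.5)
The plan is to compute $G_X(t)=E(t^X)$ straight from the definition, using the probability distribution \eqref{eq:pmf-sB}, and then sum the resulting finite geometric progression. Since $X$ takes only the values $0,1,\ldots,s$, we have
\[
G_X(t)=\sum_{k=0}^{s}t^k\,\mathbb{P}(X=k)=\sum_{k=0}^{s}t^k p^k q^{s-k}=\sum_{k=0}^{s}(pt)^k q^{s-k},
\]
which already gives the first equality in \eqref{eq:5}. For the closed form we will use the elementary factorization $a^{s+1}-b^{s+1}=(a-b)\sum_{k=0}^{s}a^k b^{s-k}$, taking $a=pt$ and $b=q$. Whenever $pt\neq q$ we can divide by $pt-q$, and this yields $\frac{(pt)^{s+1}-q^{s+1}}{pt-q}$.

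Next we explain the removable singularity. At $t=q/p$ the numerator $(pt)^{s+1}-q^{s+1}$ also vanishes, and the quotient is identically equal to the polynomial $\sum_{k}(pt)^kq^{s-k}$. The singularity is therefore removable, and its value there is $(s+1)q^s$. As a sanity check we will note that $G_X(1)=\frac{p^{s+1}-q^{s+1}}{p-q}=1$ holds exactly by the normalization constraint \eqref{eq:constraint}.

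For the case $p=q$, the distribution puts mass $p^s$ on every face. The constraint then reads $(s+1)p^s=1$, which is $p=(s+1)^{-1/s}$, so $p^s=\frac{1}{s+1}$. Hence
\[
G_X(t)=\frac{1}{s+1}\sum_{k=0}^{s}t^k=\frac{1}{s+1}\cdot\frac{1-t^{s+1}}{1-t}\qquad(t\neq1),
\]
and the value at $t=1$ is again obtained by continuity. One can also see this as the specialization $a=t$, $b=1$ of the same factorization, after pulling out the common factor $p^s$.

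There is no real obstacle here. The only points needing care are to state clearly where the closed forms hold ($pt\neq q$, respectively $t\neq 1$) and to observe that the normalization constraint is exactly what makes $G_X(1)=1$.
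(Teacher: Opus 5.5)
Your proposal is correct and is the intended argument. The paper gives no separate proof of this theorem and treats it as the immediate evaluation of $\sum_{k=0}^{s}t^kp^kq^{s-k}$ through the factorization $a^{s+1}-b^{s+1}=(a-b)\sum_{k=0}^{s}a^kb^{s-k}$, with $p^s=\frac{1}{s+1}$ when $p=q$; your extra remarks (the removable value $(s+1)q^s$ at $t=q/p$, and $G_X(1)=1$ coming exactly from the normalization constraint) are accurate.
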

\begin{corollary}\label{cor:EV-sB}
	Let $ X $ be a random variable that follows an $s$-Bernoulli distribution with parameter $ p $, $ X \sim \mathcal {B}^{(s)} (p) $. Then the expectation and the variance of $ X $ are
	\begin{equation}
		E(X) =	\frac{p \left((s+1) p^{s}-1\right)}{(p-q)},\qquad V(X) =\frac{p q\left( 1-(s+1)^2 (p q)^s\right) }{(p-q)^2}, \text{ for } p\neq q, \label{eq:6}
	\end{equation}
	and
	\begin{equation}
		E(X)= s/2,\qquad V(X)=s(s+2)/12, \qquad \text{ for }p=q.
	\end{equation}
\end{corollary}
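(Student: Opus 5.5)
We will derive both moments from the probability generating function of Theorem~\ref{thm:gf-sB}, using $E(X)=G_X'(1)$ and $V(X)=G_X''(1)+G_X'(1)-G_X'(1)^2$. Differentiating the closed form \eqref{eq:5} at $t=1$ would produce lengthy quotient-rule expressions. Instead we will work with the logarithm. For $p\neq q$, set $N(t)=(pt)^{s+1}-q^{s+1}$ and $D(t)=pt-q$, so that $\log G_X=\log N-\log D$. Then
\[
E(X)=\frac{N'(1)}{N(1)}-\frac{D'(1)}{D(1)},\qquad V(X)=\Bigl.\frac{d}{dt}\Bigl(t\frac{d}{dt}\log G_X(t)\Bigr)\Bigr|_{t=1},
\]
where the second identity is the standard expression of the variance as the second cumulant through $t\,\frac{d}{dt}$ applied twice to $\log G_X$. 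The constraint \eqref{eq:constraint} gives $N(1)=D(1)=p-q$, which keeps everything explicit.

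For the expectation, we have $N'(1)=(s+1)p^{s+1}$ and $D'(1)=p$. Hence
\[
E(X)=\frac{(s+1)p^{s+1}-p}{p-q}=\frac{p\bigl((s+1)p^s-1\bigr)}{p-q}.
\]
For the variance, write $u=pt$. Then $t\,\frac{d}{dt}\log G_X=\frac{(s+1)u^{s+1}}{u^{s+1}-q^{s+1}}-\frac{u}{u-q}$. Applying $t\,\frac{d}{dt}=u\,\frac{d}{du}$ once more and evaluating at $u=p$ yields
\[
V(X)=-\frac{(s+1)^2p^{s+1}q^{s+1}}{(p^{s+1}-q^{s+1})^2}+\frac{pq}{(p-q)^2}.
\]
Substituting $p^{s+1}-q^{s+1}=p-q$ from \eqref{eq:constraint} gives
\[
V(X)=\frac{pq-(s+1)^2(pq)^{s+1}}{(p-q)^2}=\frac{pq\bigl(1-(s+1)^2(pq)^s\bigr)}{(p-q)^2},
\]
which is the formula in \eqref{eq:6}.

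For $p=q$, the law is uniform on $\{0,\dots,s\}$, with each value having probability $p^s=1/(s+1)$. We will compute the moments directly rather than through \eqref{eq:5.1}, to avoid the removable singularity at $t=1$. We have
\[
E(X)=\frac{1}{s+1}\sum_{k=0}^{s}k=\frac{s}{2},\qquad E(X^2)=\frac{1}{s+1}\sum_{k=0}^{s}k^2=\frac{s(2s+1)}{6}.
\]
Therefore $V(X)=\frac{s(2s+1)}{6}-\frac{s^2}{4}=\frac{s(s+2)}{12}$. As a consistency check, the case $p=q$ is also the limit $\theta=p/q\to1$ of the general case.

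The main obstacle is the algebraic simplification in the variance. The step that makes it work is recognizing that the constraint \eqref{eq:constraint} lets us replace $(p^{s+1}-q^{s+1})^2$ by $(p-q)^2$, so that the two cumulant terms combine over a common denominator. Working with $\log G_X$ rather than $G_X$ itself is what keeps this computation short.
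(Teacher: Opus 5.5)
Your proof is correct and follows the paper's route: differentiate the closed-form generating function \eqref{eq:5} at $t=1$ and simplify with the normalization $p^{s+1}-q^{s+1}=p-q$. Your logarithmic-derivative (second-cumulant) computation supplies, more cleanly, the variance algebra the paper leaves as ``after simplification,'' and computing the case $p=q$ directly from the uniform pmf is equivalent to the paper's use of \eqref{eq:5.1}. (Minor point: when $p<q$, both $N(1)$ and $D(1)$ are negative, so write $\log|N|-\log|D|$, or just use $G_X'/G_X=N'/N-D'/D$, instead of $\log N-\log D$.)
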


\begin{proof}
	Using the generating function given in \eqref{eq:5}, the expectation of $ X $, when $ p\neq q $, is
	\begin{equation*}
		E(X)=G'_X(1)= \frac{(s+1) p^{s+1}}{p-q}-\frac{p \left(p^{s+1}-q^{s+1}\right)}{(p-q)^2}=\frac{p\left((s+1)p^s-1\right)}{p-q},
	\end{equation*}
	where the last equality uses the normalization $p^{s+1}-q^{s+1}=p-q$ of \eqref{eq:constraint}. Similarly, $V(X)=G''_X(1)+G'_X(1)-G'_X(1)^2$ yields, after simplification through \eqref{eq:constraint}, the stated variance. The case $p=q$ is a direct computation from \eqref{eq:5.1}.
\end{proof}

\subsection{Relation between Eulerian numbers and the moment function}

In order to introduce higher-order moments, we need to consider the moment generating function of the $s$-Bernoulli distribution,
\begin{equation}
	M_X(t)=E(e^{tX})=\frac{(p e^t)^{s+1}-q^{s+1}}{pe^t-q}.
\end{equation} 

The explicit expression of this function reveals a relation with the explicit formula for the Eulerian numbers (see \cite{com1}), given by
\begin{equation}
	A(n,k) =\sum _{j=0}^k (-1)^{j} \binom{n+1}{j} (k+1-j)^n,\qquad \ 0\leq k\leq n-1,\label{eq19}
\end{equation}
where $A(n,k)$ counts the permutations of $\{1,\dots,n\}$ with exactly $k$ descents.\\
By introducing the parameter $ s $, we get an extended Eulerian number form.
\begin{definition} For integers $ n\geq0 $, $ k\geq0 $ and $ s\geq0 $, the extended Eulerian number is defined by
	\begin{equation}
		A(n,k,s) = \sum _{j=0}^k (-1)^{j} \binom{n+1}{j} (k+1-j-s)^n, \qquad \ 0\leq k\leq n. \label{eq 23}
	\end{equation}	
	For $ s=0 $ the summand reduces to that of \eqref{eq19}, so we recover the classical Eulerian numbers on their support, $ A(n,k,0)=A(n,k) $ for $ 0\leq k\leq n-1 $, with the additional value $ A(n,n,0)=0 $. Note that, contrary to the classical Eulerian numbers, the numbers $A(n,k,s)$ are not positive in general for $s\geq1$.
\end{definition}

\begin{theorem}\label{thm:moments-sB}
	The $r$-th moment $m_r(s)=E(X^r)$ of the $s$-Bernoulli distribution is given by
	\begin{equation}
		m_r(s) =\frac{(-1)^r }{(p-q)^{r+1}} p  \sum_{k=0}^r\left( A(r,k,s+1)  p^{r-k+s} q^{k} - A(r,k,1)  p^{r-k} q^{k+s} \right),\quad\text{ for } p\neq q,\label{eq:mr}
	\end{equation}
	and
	\begin{equation}
		m_r(s) = \frac{1}{(r+1)(s+1)}\sum_{k=0}^r(-1)^k { r+1 \choose k} B_k s^{r-k+1} , \quad\text{ for } p= q,
	\end{equation}
	where the $ B_k $ are the Bernoulli numbers, with the convention $B_1=-\tfrac12$; see \cite{Sriva}.
\end{theorem}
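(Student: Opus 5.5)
The plan is a generating-function computation with the finite sum split into two infinite series, each of which is turned into a polynomial by the classical Eulerian device. Throughout, $p\neq q$ and $x=q/p$. By \eqref{eq:pmf-sB},
\[
m_r(s)=\sum_{k=0}^{s}k^r p^k q^{s-k}=p^s\sum_{m=0}^{s}(s-m)^r x^m=(-1)^r p^s\sum_{m=0}^{s}(m-s)^r x^m ,
\]
after substituting $m=s-k$. I will treat everything as formal power series in $x$, so that the condition $|x|<1$ is never needed.

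\textbf{Splitting the finite sum.} Write $\sum_{m=0}^{s}=\sum_{m\ge 0}-\sum_{m\ge s+1}$. In the tail, put $m=s+\ell$ with $\ell\ge1$; the tail becomes $x^s\sum_{\ell\ge 1}\ell^r x^\ell=x^s\sum_{\ell\ge0}\ell^r x^\ell$ for $r\ge1$. For $r=0$ the same bookkeeping works, with a harmless check of the $\ell=0$ term. Hence
\[
m_r(s)=(-1)^r p^s\Big(S_{-s}(x)-x^sS_0(x)\Big),\qquad S_a(x):=\sum_{m\ge0}(m+a)^r x^m .
\]

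\textbf{Key step.} The main point is the Eulerian identity
\[
(1-x)^{r+1}S_a(x)=\sum_{k=0}^{r}\Big(\sum_{j=0}^{k}(-1)^j\binom{r+1}{j}(k-j+a)^r\Big)x^k .
\]
Its proof has two parts.
\begin{itemize}
\item The Cauchy product of $(1-x)^{r+1}=\sum_j(-1)^j\binom{r+1}{j}x^j$ with $S_a$ gives, as the coefficient of $x^k$, exactly the inner sum with $j\le k$. The restriction $j\le k$ comes from the series starting at $m=0$; this is why the definition \eqref{eq 23} truncates at $j=k$.
\item For $k\ge r+1$, the constraint $j\le k$ is vacuous because $\binom{r+1}{j}=0$ for $j>r+1$. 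The coefficient is then the $(r+1)$-st finite difference of the degree-$r$ polynomial $t\mapsto (t+a)^r$, which vanishes. So the product is a polynomial of degree at most $r$.
\end{itemize}
I expect this degree truncation to be the only real obstacle. It is also what makes $k=r$ the natural upper limit in \eqref{eq:mr}, despite $A(n,k,s)$ being defined up to $k=n$.

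\textbf{Matching with the statement.} With $a=-s$, the inner coefficients are $(k-j-s)^r=(k+1-j-(s+1))^r$, i.e.\ $A(r,k,s+1)$. With $a=0$, they are $(k-j)^r$, i.e.\ $A(r,k,1)$. Now use $(1-x)^{-(r+1)}=p^{r+1}/(p-q)^{r+1}$ and $x^k=q^kp^{-k}$. The first term becomes
\[
\frac{p^{s+r+1}}{(p-q)^{r+1}}\,A(r,k,s+1)\,q^kp^{-k}=\frac{p\,A(r,k,s+1)\,p^{r-k+s}q^k}{(p-q)^{r+1}},
\]
and the second becomes
\[
\frac{p^{s+r+1}}{(p-q)^{r+1}}\,A(r,k,1)\,q^{k+s}p^{-k-s}=\frac{p\,A(r,k,1)\,p^{r-k}q^{k+s}}{(p-q)^{r+1}}.
\]
Together with the sign $(-1)^r$, this is exactly \eqref{eq:mr}.

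\textbf{The case $p=q$.} All faces have probability $1/(s+1)$, so $m_r(s)=\frac{1}{s+1}\sum_{k=0}^{s}k^r$. The stated formula is Faulhaber's formula for $\sum_{k=0}^{s}k^r$ with the convention $B_1=-\tfrac12$, quoted from \cite{Sriva}. Since the sum runs up to $s$ rather than $s-1$, the convention needs checking; the case $r=1$, giving $s/2$ in agreement with Corollary~\ref{cor:EV-sB}, serves as that check.
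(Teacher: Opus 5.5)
Your argument is correct for $r\ge 1$, and it takes a genuinely different route from the paper.

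\textbf{The paper's route.} It stays with $M_X(t)=\frac{(pe^t)^{s+1}-q^{s+1}}{pe^t-q}$ and inducts on $r$. It differentiates $r$ times and asserts that the numerator of $M_X^{(r)}(t)$ splits into two polynomial forms in $pe^t$ and $q$, whose coefficients satisfy ``the recurrence of the extended Eulerian numbers''. It then evaluates at $t=0$. That recurrence is never written down, and it is never shown to be solved by the closed alternating sum defining $A(r,k,s)$.

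\textbf{Your route.} You pass to $x=q/p$ and split the finite sum as $S_{-s}(x)-x^sS_0(x)$ in formal power series. You then prove directly, by a Cauchy product and the vanishing of $(r+1)$-st differences of a degree-$r$ polynomial, that $(1-x)^{r+1}S_a(x)$ is a polynomial of degree at most $r$ whose coefficients are exactly those truncated alternating sums.

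\textbf{What each buys.} Your route gives the closed forms of $A(r,k,s+1)$ and $A(r,k,1)$ with no recurrence to solve. It also explains both the truncation $j\le k$ and the range $k\le r$. It shows the result is a polynomial identity in $p,q$ for any $p\neq q$, independent of the normalization constraint. The paper's route, if completed, would give a recursive scheme tied to successive derivatives of $M_X$, closer to the definition of moments, but as written it is only a sketch. For $p=q$ the two arguments coincide: both rest on Faulhaber's formula.

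\textbf{A correction on $r=0$.} Your remark that $r=0$ goes through ``with a harmless check of the $\ell=0$ term'' is false. For $r=0$ the tail is $x^{s+1}/(1-x)$. This equals $x^sS_0(x)$ only under the convention $0^0=0$, and with that convention $t\mapsto t^0$ is no longer a polynomial, so the degree truncation fails. In fact both branches of the statement compute $\sum_{k=1}^{s}k^rp^kq^{s-k}$ rather than $m_r(s)$. At $r=0$ the first branch gives $p(p^s-q^s)/(p-q)=1-q^s$ (with $0^0=1$), and the second gives $s/(s+1)$; neither equals $1$. So the theorem holds only for $r\ge 1$, and you should state that restriction instead of claiming the case $r=0$.

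\textbf{The case $p=q$.} Your single check at $r=1$ does fix the Bernoulli convention for all $r$, but you should say why. The two forms of Faulhaber's formula, summing up to $s$ or up to $s-1$, differ only in the sign of the $B_1$ term, because $B_j=0$ for odd $j\ge 3$.
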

\begin{proof} 
	For $ p\neq q $, we proceed by induction on $r$: differentiating $M_X$ successively, the numerator of $M_X^{(r)}(t)$ splits into two polynomial forms in $pe^t$ and $q$, whose coefficients satisfy the recurrence of the extended Eulerian numbers $A(r,k,s)$; identifying the coefficients at $t=0$, the first form is generated by the coefficients $ A(r,k,s+1)  $ and the second by the extended Eulerian numbers $ A(r,k,1) $, which gives \eqref{eq:mr}. For $ p= q $, we have $p^s=\frac{1}{s+1}$ by \eqref{eq:constraint} and we find the Faulhaber-like formula $ m_r(s)=p^s\sum_{k=1}^s k^r  $, which can be expressed in terms of the Bernoulli numbers.
\end{proof}
We denote by $ \mu_r=E\left[(X-E(X))^r\right] $ and $ \sigma=\sqrt{\mu_2} .$ Then the skewness and the kurtosis are defined respectively by $ \gamma_1=\mu_3/\sigma^3 $ and $ \gamma_2=\mu_4/\sigma^4 $ (so that the excess kurtosis is $\gamma_2-3$).

We take as an example the third and fourth moments for any $ s $, when $ p=q $, which are respectively
\begin{equation}
	m_3(s)=\frac{1}{4} s^2 (s+1), \qquad m_4(s)=\frac{1}{30} s \left(6 s^3+9 s^2+s-1\right),
\end{equation}
where the skewness is zero and the kurtosis is as follows
\begin{equation}
	\gamma_2=\frac{3 \left(3 s^2+6s-4\right)}{5 s (s+2)}.
\end{equation}

\section{The bi$ ^s $nomial distribution}

The bi$^{s}$nomial distribution is obtained by repeating $ n $ times the de Moivre experiment, identically and independently. Equivalently, instead of a single die we take $ n $ dice with $ s+1 $ faces and observe a random variable $ X $ equal to the sum of the outcomes, which takes its values between $0$ and $sn$. This random variable follows the bi$^{s}$nomial distribution with parameters $ (n, p) $.

We define a random variable $ X $ taking its values in the set $ \{0,1,2, \ldots, sn \} $. We call $ X $ a variable following \textbf{the bi$^s$nomial distribution} with parameters $ (n, p) $, denoted by $ X\sim \mathcal{B}^{(s)}(n,p) $, with mass function
\begin{equation}
	\mathbb  {P}(X=k)= { n \choose k}_s p^kq^{sn-k},\quad 0\leq k\leq sn, \label{26}
\end{equation}
under the condition $ \sum_{k=0}^{s} p^k q^{s-k}=1 $; indeed,
$\sum_{k=0}^{sn}{n\choose k}_sp^kq^{sn-k}=\big(\sum_{k=0}^{s}p^kq^{s-k}\big)^n=1$.
\begin{itemize}
	\item For $ s=1 $, we find the classical binomial probability mass function.
	\item For $ p=q $, we have the uniform case, the $s$-uniform distribution:
	\begin{equation}
		\mathbb{P}(X=k)=\dfrac{{n \choose k}_s}{(s+1)^{n}}.
	\end{equation} 
\end{itemize}

In \cite{B}, Belbachir determined the smallest mode of the bi$^{s}$nomial coefficients, that is, for $n\geq2$, the smallest index achieving
\begin{equation}
	k_n:=\arg\underset{k}{\max}\,{n\choose k}_s=\left\lfloor \frac{sn}{2}\right\rfloor,
\end{equation}
which, in the uniform case $p=q$, is the mode of the distribution and yields the maximal probability $\mathbb{P}(X=k_n)$. (For $n=1$ the single die is uniform on $\{0,\dots,s\}$, so every index is a mode.)

\subsection{The generating function}
In the following theorem, we present the generating function of the bi$^{s}$nomial distribution.	
\begin{theorem}
	Let $ X $ be a discrete random variable with the bi$^{s}$nomial distribution with parameters $ n $ and $ p $, $ X\sim \mathcal{B}^{(s)}(n,p) $. Then, since $X$ is a sum of $n$ i.i.d.\ $s$-Bernoulli variables, $G_X=(G_{X_1})^n$; explicitly,
	\begin{equation}
		G_X(t) =\left(\frac{q^{s+1}-(pt)^{s+1} }{q-p t}\right)^n=\sum_{k=0}^{sn}{n\choose k}_s (pt)^kq^{sn-k},\quad \text{ for } p\neq q,\label{eq:19}
	\end{equation}
	and
	\begin{equation}
		G_X(t) =\frac{1}{\left( s+1\right)^n}\left( \frac{1-t^{s+1}}{1-t}\right)^n=\frac{1}{\left( s+1\right)^n}\sum_{k=0}^{sn}{n\choose k}_s t^k, \quad \text{ for } p=q.
	\end{equation}
\end{theorem}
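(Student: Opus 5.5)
The plan is to write $X=X_1+\cdots+X_n$ with $X_1,\dots,X_n$ independent copies of an $s$-Bernoulli variable $X_1\sim\mathcal{B}^{(s)}(p)$; this is exactly how the bi$^{s}$nomial distribution was introduced, as $n$ independent throws of the de Moivre die. Independence gives $E(t^X)=\prod_{i=1}^n E(t^{X_i})=G_{X_1}(t)^n$. Substituting the closed forms of Theorem~\ref{thm:gf-sB} yields both stated closed expressions:
\begin{itemize}
\item for $p\neq q$, equation \eqref{eq:5}, after multiplying numerator and denominator by $-1$;
\item for $p=q$, equation \eqref{eq:5.1}.
\end{itemize}
As in Theorem~\ref{thm:gf-sB}, the point $pt=q$ (resp.\ $t=1$) is a removable singularity, because each factor is a polynomial in $t$.

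For the series expansions, I will not pass through the closed forms but expand the polynomial form directly:
\[
G_{X_1}(t)^n=\Big(\sum_{j=0}^{s}(pt)^jq^{s-j}\Big)^n=\sum_{x_1,\dots,x_n\in\{0,\dots,s\}}(pt)^{x_1+\cdots+x_n}q^{sn-(x_1+\cdots+x_n)}.
\]
Grouping the terms by $k=x_1+\cdots+x_n$, the coefficient of $(pt)^kq^{sn-k}$ is the number of solutions of system \eqref{eq 1}, which is ${n\choose k}_s$ by the combinatorial interpretation recalled in the introduction. This gives the sum in \eqref{eq:19}. It also agrees with the definition \eqref{26}, since $G_X(t)=\sum_k \mathbb{P}(X=k)t^k$.

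In the uniform case, $p^s=1/(s+1)$, so $(pt)^kq^{sn-k}=p^{sn}t^k=t^k/(s+1)^n$, which gives the second formula. Equivalently, one can use $\big(\sum_{j=0}^s t^j\big)^n=\sum_k{n\choose k}_st^k$ together with $\sum_{j=0}^s t^j=(1-t^{s+1})/(1-t)$.

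There is no real obstacle. The only point needing care is the identification of the multinomial expansion's coefficients with ${n\choose k}_s$, and that is precisely Bondarenko's counting interpretation already recorded in the introduction.
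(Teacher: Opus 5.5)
Your proposal is correct and follows essentially the same route as the paper. The paper justifies the statement by $G_X=(G_{X_1})^n$ for a sum of $n$ i.i.d.\ $s$-Bernoulli variables, together with the closed forms of Theorem~\ref{thm:gf-sB} and the expansion $\big(\sum_{k=0}^{s}p^kq^{s-k}\big)^n=\sum_{k}{n\choose k}_s p^kq^{sn-k}$ that underlies definition \eqref{26}; your grouping of the multinomial expansion via the counting interpretation, and your reduction $(pt)^kq^{sn-k}=t^k/(s+1)^n$ when $p=q$, just spell out steps the paper leaves implicit.
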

\begin{corollary}
	Let $ X $ be a discrete random variable with the bi$^{s}$nomial distribution with parameters $ n $ and $ p $, $ X\sim \mathcal{B}^{(s)}(n,p) $. The expectation and variance are given by
	\begin{equation}
		E(X) =np\frac{(s+1)p^{s}-1}{(p-q)},\qquad V(X) =npq\frac{ 1-(s+1)^2(pq)^s}{(p-q)^2}, \quad \text{ for } p\neq q,
	\end{equation}
	and
	\begin{equation}
		E(X) =sn/2, \qquad V(X) = sn(s+2)/12,\quad \text{ for } p= q.
	\end{equation}
\end{corollary}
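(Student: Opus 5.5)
The corollary follows from the fact that a bi$^{s}$nomial variable is a sum of independent $s$-Bernoulli variables. The generating-function theorem just stated already uses this: $X\sim\mathcal{B}^{(s)}(n,p)$ can be written $X=X_1+\cdots+X_n$ with the $X_i$ i.i.d.\ $\mathcal{B}^{(s)}(p)$, and $G_X=(G_{X_1})^n$. Linearity of expectation gives $E(X)=nE(X_1)$. Independence makes the variance additive, so $V(X)=nV(X_1)$. Substituting the values from Corollary~\ref{cor:EV-sB} gives both cases at once: for $p\neq q$ the stated expressions appear with the factor $n$ in front, and for $p=q$ we get $E(X)=sn/2$ and $V(X)=sn(s+2)/12$.

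To stay inside the generating-function framework, I would instead differentiate $G_X=(G_{X_1})^n$ directly. The first derivative is $G_X'(t)=nG_{X_1}(t)^{n-1}G_{X_1}'(t)$. Since $G_{X_1}(1)=1$ by the normalization \eqref{eq:constraint}, this gives $E(X)=nG_{X_1}'(1)=nE(X_1)$. The second derivative at $t=1$ is
\[
G_X''(1)=n(n-1)G_{X_1}'(1)^2+nG_{X_1}''(1).
\]
Then $V(X)=G_X''(1)+G_X'(1)-G_X'(1)^2$. The $n^2G_{X_1}'(1)^2$ terms cancel, leaving $n\bigl(G_{X_1}''(1)+G_{X_1}'(1)-G_{X_1}'(1)^2\bigr)=nV(X_1)$.

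There is no real obstacle, since all the algebra was already done for $n=1$ in Corollary~\ref{cor:EV-sB}. The only point needing care is the variance formula for $p\neq q$. If one wants to check it independently of that corollary, one computes $G_{X_1}''(1)$ from \eqref{eq:5} and simplifies with $p^{s+1}-q^{s+1}=p-q$. The key step is that the numerator collapses to $pq\bigl(1-(s+1)^2(pq)^s\bigr)$. This uses the identity $p^s(p-q)=p(p^s-1)+\ldots$ coming from the constraint $p(p^s-1)=q(q^s-1)$, which lets one rewrite $(s+1)p^{s}$-type terms symmetrically in $p$ and $q$. That algebraic simplification would be the most computational step, but it is inherited from the single-die case.
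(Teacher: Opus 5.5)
Your proposal is correct and follows the paper's proof: the paper also writes $X$ as a sum of $n$ i.i.d.\ $s$-Bernoulli variables, deduces $E(X)=nE(X_1)$ and $V(X)=nV(X_1)$ from Corollary~\ref{cor:EV-sB}, and gives differentiating $G_X=(G_{X_1})^n$ twice at $t=1$ as the alternative, a computation you carry out correctly. The loosely stated identity in your last paragraph is not needed, because that corollary already supplies the single-die simplification.
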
	

\begin{proof}
	Since $X$ is a sum of $n$ i.i.d.\ $s$-Bernoulli random variables, $G_X=(G_{X_1})^n$; hence $E(X)=nE(X_1)$ and $V(X)=nV(X_1)$, and the result follows from Corollary~\ref{cor:EV-sB}. Alternatively, differentiate \eqref{eq:19} twice at $t=1$.
\end{proof}

\subsection{Relation between the partial Bell polynomials and the moment function}
The calculation of higher moments for the bi$^{s}$nomial distribution shows a connection to the partial Bell polynomials of $ x_1, x_2,\ldots $, defined by their generating function as follows (see \cite{com1}):
\begin{equation}
	\sum_{n=k}^{\infty}B_{n,k}(x_1,x_2,\ldots)\frac{t^{n}}{n!}=\frac{1}{k!}\left( \sum_{m=1}^{\infty}x_m
	\frac{t^m}{m!}\right)^k. \label{eq:bellgf}
\end{equation}
The explicit formula for $ B_{n,k}(x_1,x_2,\ldots) $ is given by
\begin{equation}
	B_{n,k}(x_{1},x_{2},\ldots )=\sum_{\substack{ j_{1}+j_{2}+\cdots =k\\ j_{1}+2j_{2}+\cdots=n}} \frac{n!}{j_{1}!j_{2}!\cdots }\left(\frac{x_{1}}{1!}\right)^{j_{1}} \left(\frac{x_{2}}{2!}\right)^{j_{2}}\cdots.\label{31}
\end{equation}
We note that the formula \eqref{31} involves a finite number of terms owing to the constraint $ j_{1}+2j_{2}+3j_3+\cdots=n .$ Thereafter we can use either notation $ B_{n,k}(x_{1},x_{2},\ldots ) $ or $ B_{n,k}(x_{1},x_{2},\ldots,x_{n-k+1} ) $; for more details we refer to \cite{com1}.
\begin{theorem}\label{thm:moments-bis}
	The moment of order $r$ of the bi$^{s}$nomial distribution, denoted by $M(r)$, has a polynomial form generated by the partial Bell polynomials:
	\begin{equation}
		M(r)=\sum_{k=1}^{r} (n)_k B_{r,k}\Big(\sum_{l=1}^{s}l p^l q^{s-l},\sum_{l=1}^{s}l^2 p^l q^{s-l},\ldots,\sum_{l=1}^{s}l^{r-k+1} p^l q^{s-l}\Big), \quad \text{ for } p\neq q,
	\end{equation} 
	and
	\begin{equation}
		M(r)=\sum_{k=1}^{r} (n)_k p^{ks} B_{r,k}\Big(\sum_{l=1}^{s}l ,\sum_{l=1}^{s}l^2,\ldots,\sum_{l=1}^{s}l^{r-k+1} \Big),\quad \text{ for } p= q,
	\end{equation}
	where $ (n)_k $ is the falling factorial, $ (n)_k=n(n-1)\cdots(n-k+1). $
\end{theorem}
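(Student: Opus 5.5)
The plan is to work with the moment generating function of $X\sim\mathcal{B}^{(s)}(n,p)$ and expand its $n$-th power by the binomial theorem, so that the partial Bell polynomials appear through their generating function \eqref{eq:bellgf}. Since $X$ is a sum of $n$ i.i.d.\ $s$-Bernoulli variables, the moment generating function is $M_X(t)=\big(M_{X_1}(t)\big)^n$, where
\begin{equation*}
M_{X_1}(t)=\sum_{l=0}^{s}e^{tl}p^lq^{s-l}=1+\sum_{l=1}^{s}(e^{tl}-1)p^lq^{s-l}.
\end{equation*}
The second form uses the normalization $\sum_{l=0}^{s}p^lq^{s-l}=1$. Expanding $e^{tl}-1=\sum_{m\ge1}l^m t^m/m!$ and interchanging the finite sum over $l$ gives
\begin{equation*}
M_{X_1}(t)=1+g(t),\qquad g(t)=\sum_{m\ge1}x_m\frac{t^m}{m!},\qquad x_m=\sum_{l=1}^{s}l^m p^lq^{s-l}.
\end{equation*}
The essential point is that $g(0)=0$.

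Next, apply the binomial theorem: $(1+g(t))^n=\sum_{k=0}^{n}\binom{n}{k}g(t)^k=\sum_{k\ge0}(n)_k\,\frac{g(t)^k}{k!}$. For integer $n$ the terms with $k>n$ vanish because $(n)_k=0$, so we may sum over all $k$. By \eqref{eq:bellgf}, $\frac{g(t)^k}{k!}=\sum_{r\ge k}B_{r,k}(x_1,x_2,\ldots)\frac{t^r}{r!}$. Since $g(t)^k=O(t^k)$, each coefficient of $t^r$ receives contributions from only finitely many $k$, namely $1\le k\le r$ for $r\ge1$. This justifies exchanging the order of summation as formal power series. Reading off the coefficient of $t^r/r!$ gives
\begin{equation*}
M(r)=E(X^r)=\sum_{k=1}^{r}(n)_kB_{r,k}(x_1,\ldots,x_{r-k+1}),
\end{equation*}
because $B_{r,k}$ depends only on $x_1,\ldots,x_{r-k+1}$. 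This is the case $p\neq q$.

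For $p=q$ we have $p^lq^{s-l}=p^s$ for every $l$, so $x_m=p^s\sum_{l=1}^{s}l^m$. The partial Bell polynomial is homogeneous of degree $k$, since every monomial has $j_1+j_2+\cdots=k$ in \eqref{31}. Hence $B_{r,k}(p^sy_1,p^sy_2,\ldots)=p^{ks}B_{r,k}(y_1,y_2,\ldots)$, which gives the second formula. Here $p^s=1/(s+1)$ by the normalization.

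The main obstacle is mild. It consists in rewriting $M_{X_1}$ as $1+g$ with $g(0)=0$, which is exactly where the normalization constraint is used, and in justifying the coefficient extraction. Both are handled by the fact that the $l$-sum is finite and that $g^k$ starts at order $t^k$. Note that the closed form \eqref{eq:5} of the generating function is never needed, so the argument is uniform in $p\neq q$ and $p=q$.
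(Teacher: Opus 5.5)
Your proposal is correct and follows the paper's proof essentially step for step. Both arguments write $M_X=(1+g)^n$ with $g(t)=\sum_{m\ge1}x_m t^m/m!$ and $x_m=\sum_{l=1}^{s}l^m p^lq^{s-l}$, expand by the binomial theorem using the generating function of $B_{r,k}$ for $g^k/k!$, and read off $\sum_{k=1}^{r}(n)_kB_{r,k}$; the case $p=q$ then follows from the degree-$k$ homogeneity of $B_{r,k}$. Your extra remarks (that the constant term $1$ comes from the normalization, and that only $k\le r$ contributes to the coefficient of $t^r$) make explicit what the paper leaves implicit.
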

\begin{proof}
	Write $M_X(t)=\big(M_{X_1}(t)\big)^n=\big(1+\sum_{i\geq1}x_i t^i/i!\big)^n$ with $ x_i = \sum_{l=1}^{s} l^{i}p^l q^{s-l}=E(X_1^i)  $. Expanding by the binomial theorem, $\big(1+u\big)^n=\sum_k\binom nk u^k$, and using \eqref{eq:bellgf} for $u^k/k!$, the coefficient of $t^r/r!$ is $\sum_{k=1}^{r}\binom nk k!\,B_{r,k}(x_1,x_2,\ldots)=\sum_{k=1}^{r}(n)_k B_{r,k}(x_1,x_2,\ldots)$, which is the expression of the bi$^{s}$nomial distribution moments. On the other hand, taking  $ x_i = p^{s}\sum_{l=1}^{s} l^{i}  $ we get the $ r $-th moment in the case $ p=q $, using the homogeneity $B_{r,k}(p^sa_1,p^sa_2,\ldots)=p^{ks}B_{r,k}(a_1,a_2,\ldots)$ of degree $k$ of the partial Bell polynomials.
\end{proof}
For $ p=q $, the third and fourth moments are
\begin{equation*}
	\begin{split}
		M(3)&= n^2 s^2 (n s+s+2)/8,\\        
		M(4)&=sn\left( 15 n^3 s^3+30 n^2 s^2 (s+2)+5 sn (s+2)^2-2 (s^3+4s^2+6s+4)\right)/240.
	\end{split}
\end{equation*}

Also the skewness is zero and the kurtosis is as follows
\begin{equation}
	\gamma_2=\frac{3((5 n-2) s (s+2)-4)}{5sn (s+2)}. \label{eq:kurt-bis}
\end{equation}
Observe that $\gamma_2\rightarrow3$ as $n\rightarrow\infty$, consistently with the normal convergence established in Section~6.

\subsection{The cumulants of the bi$^{s}$nomial distribution}
The cumulant generating function of $X\sim\mathcal{B}^{(s)}(n,p)$ is $K_X(t)=\ln M_X(t)=n\ln M_{X_1}(t)$: contrary to the moments, the cumulants $\kappa_r(X)=K_X^{(r)}(0)$ are \emph{exactly linear} in $n$,
\begin{equation}
	\kappa_r(X)=n\,\kappa_r(X_1),\qquad r\geq1,
\end{equation}
which makes them particularly convenient for asymptotic purposes. They are generated by the logarithmic polynomials (see \cite[Chap.~3]{com1}), i.e.\ by the alternating combination of partial Bell polynomials appearing in the expansion of $\ln\big(1+\sum_{m\geq1}x_mt^m/m!\big)$.

\begin{theorem}\label{thm:cumulants}
	Let $X\sim\mathcal{B}^{(s)}(n,p)$ and let $x_i=\sum_{l=1}^{s}l^ip^lq^{s-l}=E(X_1^i)$. Then, for every $r\geq1$,
	\begin{equation}
		\kappa_r(X)=n\sum_{k=1}^{r}(-1)^{k-1}(k-1)!\,B_{r,k}\big(x_1,x_2,\ldots,x_{r-k+1}\big), \quad \text{ for } p\neq q, \label{eq:cum}
	\end{equation}
	and
	\begin{equation}
		\kappa_r(X)=n\sum_{k=1}^{r}(-1)^{k-1}(k-1)!\,p^{ks}\,B_{r,k}\Big(\sum_{l=1}^{s}l,\sum_{l=1}^{s}l^2,\ldots,\sum_{l=1}^{s}l^{r-k+1}\Big), \quad \text{ for } p=q.
	\end{equation}
\end{theorem}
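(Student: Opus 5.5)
The plan is to mirror the proof of Theorem~\ref{thm:moments-bis}, replacing the binomial expansion by the logarithmic series. First, since $X$ is a sum of $n$ i.i.d.\ $s$-Bernoulli variables, $M_X(t)=M_{X_1}(t)^n$. Hence $K_X(t)=\ln M_X(t)=n\ln M_{X_1}(t)$, and it suffices to compute $\kappa_r(X_1)=\frac{d^r}{dt^r}\ln M_{X_1}(t)\big|_{t=0}$ and multiply by $n$. As in the proof of Theorem~\ref{thm:moments-bis}, I will write
\[
M_{X_1}(t)=1+u(t),\qquad u(t)=\sum_{i\geq1}x_i\frac{t^i}{i!},\qquad x_i=E(X_1^i)=\sum_{l=1}^{s}l^ip^lq^{s-l},
\]
where the constant term equals $1$ by the normalization constraint~\eqref{eq:constraint}. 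The $l=0$ term contributes nothing to $x_i$ for $i\geq1$. Since $M_{X_1}$ is entire and $u(0)=0$, everything can be treated as formal power series in $t$.

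Second, I expand $\ln(1+u)=\sum_{k\geq1}(-1)^{k-1}u^k/k=\sum_{k\geq1}(-1)^{k-1}(k-1)!\,u^k/k!$. This is legitimate as a composition of formal power series because $u$ has no constant term. By the generating function~\eqref{eq:bellgf}, $u^k/k!=\sum_{r\geq k}B_{r,k}(x_1,x_2,\ldots)\,t^r/r!$. Substituting and collecting the coefficient of $t^r/r!$ gives
\[
\kappa_r(X_1)=\sum_{k=1}^{r}(-1)^{k-1}(k-1)!\,B_{r,k}(x_1,\ldots,x_{r-k+1}).
\]
The sum stops at $k=r$ because $B_{r,k}=0$ for $k>r$. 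Only $x_1,\dots,x_{r-k+1}$ appear, by the constraint $j_1+2j_2+\cdots=r$ in~\eqref{31}. Multiplying by $n$ yields~\eqref{eq:cum}.

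Third, for $p=q$ the constraint gives $p^s=1/(s+1)$ and $x_i=p^s\sum_{l=1}^{s}l^i$. The homogeneity of degree $k$ of the partial Bell polynomials, $B_{r,k}(p^sa_1,p^sa_2,\ldots)=p^{ks}B_{r,k}(a_1,a_2,\ldots)$, then gives the second formula directly.

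There is no serious obstacle. The only point needing care is the formal justification of the interchange of sums, which I handle through formal power series or through analyticity near $t=0$. As a sanity check, I will verify $\kappa_1=nx_1=E(X)$ and $\kappa_2=n(x_2-x_1^2)=V(X)$ against the Corollary.
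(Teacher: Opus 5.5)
Your proposal is correct and follows the paper's proof: write $K_X=n\ln M_{X_1}$ with $M_{X_1}=1+u$, compose $\ln(1+u)=\sum_{k\geq1}(-1)^{k-1}(k-1)!\,u^k/k!$ with the Bell generating function~\eqref{eq:bellgf}, and handle $p=q$ by the degree-$k$ homogeneity of $B_{r,k}$. Your formal-power-series justification of the composition, using that $u$ has no constant term, is a welcome bit of extra care that the paper leaves implicit.
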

\begin{proof}
	Since $K_X=n\ln M_{X_1}$ with $M_{X_1}(t)=1+\sum_{m\geq1}x_mt^m/m!$, the claim follows from the logarithmic polynomial identity (see \cite[Chap.~3]{com1})
	\begin{equation*}
		\ln\Big(1+\sum_{m\geq1}x_m\frac{t^m}{m!}\Big)=\sum_{r\geq1}\Big(\sum_{k=1}^{r}(-1)^{k-1}(k-1)!\,B_{r,k}(x_1,\ldots,x_{r-k+1})\Big)\frac{t^r}{r!},
	\end{equation*}
	obtained by composing the series $\ln(1+u)=\sum_{k\geq1}(-1)^{k-1}u^k/k$ with $u=\sum_m x_mt^m/m!$ through \eqref{eq:bellgf}. The case $p=q$ follows again from the homogeneity of degree $k$ of $B_{r,k}$.
\end{proof}

Since the one-die moments $x_i$ admit the closed form \eqref{eq:mr} of Theorem~\ref{thm:moments-sB} in terms of the extended Eulerian numbers $A(r,k,s)$, the substitution of \eqref{eq:mr} into \eqref{eq:cum} provides a fully explicit expression of all the cumulants of the bi$^{s}$nomial distribution in terms of $(s,p,q,n)$.

The first cumulants are $\kappa_1(X)=E(X)$ and $\kappa_2(X)=V(X)$, recovering the corollary above; moreover $\kappa_3(X)=n\big(x_3-3x_1x_2+2x_1^3\big)$ and $\kappa_4(X)=n\big(x_4-4x_1x_3-3x_2^2+12x_1^2x_2-6x_1^4\big)$. In the uniform case $p=q$, one gets
\begin{equation}
	\kappa_3(X)=0,\qquad \kappa_4(X)=-\,\frac{n\,s(s+2)(s^2+2s+2)}{120},
\end{equation}
so that the excess kurtosis is
\begin{equation}
	\gamma_2-3=\frac{\kappa_4(X)}{\kappa_2(X)^2}=-\,\frac{6(s^2+2s+2)}{5sn(s+2)},
\end{equation}
in accordance with \eqref{eq:kurt-bis}. In general, $\gamma_1=\kappa_3(X)/\kappa_2(X)^{3/2}=O(n^{-1/2})$ and $\gamma_2-3=\kappa_4(X)/\kappa_2(X)^{2}=O(n^{-1})$: the linearity of the cumulants in $n$ thus gives a direct quantitative explanation of the asymptotic normality established in Section~6.

\section{Going to the Poisson Distribution}

Classically, the Poisson distribution is a limiting case of the binomial distribution which arises when the number of trials $ n $ increases indefinitely while the parameter $\lambda$, which is the expected value of the number of successes over the trials, remains constant. Using the same approach, we do the same for the bi$^{s}$nomial distribution.

\begin{lemma}\label{lem:1}
	Let $k$ be a fixed nonnegative integer. Then
	\begin{equation}
		\lim_{n\rightarrow\infty}\dfrac{{n\choose k}}{n^k}=\dfrac{1}{k!}.
	\end{equation}
\end{lemma}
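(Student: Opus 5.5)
We write the binomial coefficient through the falling factorial, $\binom{n}{k}=\frac{(n)_k}{k!}$, valid for $n\geq k$. Since $k$ is fixed, it is enough to take $n\geq k$. The quantity under the limit then becomes
\begin{equation*}
	\frac{{n\choose k}}{n^k}=\frac{1}{k!}\prod_{j=0}^{k-1}\frac{n-j}{n}=\frac{1}{k!}\prod_{j=0}^{k-1}\Big(1-\frac{j}{n}\Big).
\end{equation*}
The key point is that the product has exactly $k$ factors, and this number does not depend on $n$.

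Each factor satisfies $1-j/n\to 1$ as $n\to\infty$ for fixed $j\in\{0,\dots,k-1\}$. The limit of a product of finitely many convergent sequences is the product of their limits. Hence the product tends to $1$, and the ratio tends to $1/k!$. The case $k=0$ is trivial, since both sides equal $1$.

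This argument has no real obstacle. The only point needing care is that the number of factors stays bounded, which is exactly why $k$ must be fixed and not allowed to grow with $n$. If a quantitative version is wanted later, for instance for the Poisson limit of the bi$^{s}$nomial law, one can add the elementary bound
\begin{equation*}
	1-\frac{k(k-1)}{2n}\leq\prod_{j=0}^{k-1}\Big(1-\frac{j}{n}\Big)\leq 1.
\end{equation*}
It follows from $\prod(1-a_j)\geq 1-\sum a_j$ for $a_j\in[0,1]$, and it gives the rate $O(k^2/n)$.
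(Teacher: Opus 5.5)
Your proof is correct and is essentially the paper's argument: you write $\binom{n}{k}/n^k=\frac{1}{k!}\prod_{j=0}^{k-1}(1-j/n)$ and pass to the limit in a product with a fixed, finite number of factors. Your added bound $1-\frac{k(k-1)}{2n}\leq\prod_{j=0}^{k-1}(1-j/n)\leq1$ for $n\geq k$ is valid (Weierstrass product inequality) but not needed for the lemma.
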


\begin{proof}
	We have	$ {n\choose k}=\frac{n(n-1)\cdots(n-k+1)}{k!} $; hence
	$ \lim\limits_{n\rightarrow\infty}\dfrac{{n\choose k}}{n^k}=\lim\limits_{n\rightarrow\infty}\dfrac{1}{k!}\prod_{j=0}^{k-1}\Big(1-\frac jn\Big)=\dfrac{1}{k!} .$
\end{proof}
\begin{lemma}\label{2} Let $ s\geq1 $ and $ k\geq0 $ be fixed integers. We have
	\begin{equation}
		\lim\limits_{n\rightarrow\infty} \dfrac{{n \choose k}_s}{n^k}=\dfrac{1}{k!}.
	\end{equation}
\end{lemma}
\begin{proof}
	We use the formula of Belbachir and Benmezai \cite{BBBen14}:
	\begin{equation}
		{n \choose k }_s =(-1)^k \sum\limits_{j_1+j_2+\cdots+j_s=k} {n \choose j_1} {n \choose j_2}\cdots {n \choose j_s} a^{j_1+2j_2+\cdots+sj_s},
	\end{equation}
	with $ a=\exp\left( -2\pi i/(s+1)\right)  $, where $ i^2=-1. $ 
	Since the sum is finite, we may interchange limit and summation:
	\begin{equation*}
		\begin{split}
			\lim\limits_{n\rightarrow +\infty} \frac{{ n \choose k}_s}{n^k}&= (-1)^k \sum\limits_{j_1+\cdots+j_s=k} \lim\limits_{n\rightarrow +\infty}\frac{{n \choose j_1}}{n^{j_1}} \frac{{n \choose j_2}}{n^{j_2}}\cdots \frac{{n \choose j_s}}{n^{j_s}} a^{j_1+2j_2+\cdots+sj_s}
			\\&=(-1)^k \sum\limits_{j_1+\cdots+j_s=k} \frac{1}{j_1!} \frac{1}{j_2!}\cdots \frac{1}{j_s!} a^{j_1+2j_2+\cdots+sj_s}
			\\&= \frac{(-1)^k}{k!} \sum\limits_{j_1+\cdots+j_s=k} \binom{k}{j_1,\ldots,j_s}a^{j_1+2j_2+\cdots+sj_s}\\&= \frac{(-1)^k}{k!}(a+a^2+\cdots+a^s)^k\\&= \frac{(-1)^k}{k!} \left( \frac{a-a^{s+1}}{1-a}\right) ^k.
		\end{split}
	\end{equation*}
	Since $ a^{s+1}=\exp\left( -2\pi i\right) =1$, we get $a+a^2+\cdots+a^s=\frac{a-a^{s+1}}{1-a}=\frac{a-1}{1-a}=-1$, and then
	\begin{equation*}
		\lim\limits_{n\rightarrow +\infty} \frac{{ n \choose k}_s}{n^k}=\frac{(-1)^k}{k!} (-1)^k
		=\frac{1}{k!}.
	\end{equation*}
\end{proof}

We shall use the following classical result, whose proof can be found in \cite{car}.
\begin{theorem}[Implicit Function Theorem]\label{4}
	Assume that $ \Omega $ is an open subset of $ \mathbb{R}^2 $ and that $ f : \Omega\rightarrow \mathbb{R} $ is a function of class $ C^k $. Assume also that $ (a,b) $ is a point in $ \Omega $ such that 
	$$ f(a,b)=0 \qquad\qquad \text{ and }\qquad\qquad \frac{\partial f}{\partial y}(a,b)\neq 0. $$
	Then there exist an open neighborhood $ U $ of $ (a,b) $ in $ \mathbb{R}^2 $, an open interval $ I $ of $ \mathbb{R} $ which contains $ a $, and a function $ g:I\rightarrow\mathbb{R} $ of class $ C^k $ such that, for any $ (x,y)\in U $, we have 
	$$ f(x,y)=0\Leftrightarrow y=g(x). $$
	Moreover, for all $ x \in I$, we have
	$$ g'(x)=-\dfrac{\dfrac{\partial f}{\partial x}(x,g(x))}{\dfrac{\partial f}{\partial y}(x,g(x))}. $$
\end{theorem}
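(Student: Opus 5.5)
The statement is the classical two-variable Implicit Function Theorem, quoted from \cite{car}. I would prove it by the standard contraction-mapping argument.

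\emph{Setup and localization.} Fix $(a,b)\in\Omega$ with $f(a,b)=0$ and, without loss of generality, $c:=\frac{\partial f}{\partial y}(a,b)>0$. Since $f$ is $C^1$ (as $k\ge1$), continuity gives $\delta>0$ such that the closed square $[a-\delta,a+\delta]\times[b-\delta,b+\delta]$ lies in $\Omega$ and $\frac{\partial f}{\partial y}\ge c/2$ on it. For each fixed $x$, the map $y\mapsto f(x,y)$ is therefore strictly increasing on $[b-\delta,b+\delta]$. In particular $f(a,b-\delta)<0<f(a,b+\delta)$. By continuity in $x$, there is $\eta\in(0,\delta]$ such that $f(x,b-\delta)<0<f(x,b+\delta)$ for every $x\in I:=(a-\eta,a+\eta)$.

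\emph{Existence, uniqueness and continuity.} For each $x\in I$, the intermediate value theorem and strict monotonicity give a unique $g(x)\in(b-\delta,b+\delta)$ with $f(x,g(x))=0$. Take $U:=I\times(b-\delta,b+\delta)$. The equivalence $f(x,y)=0\Leftrightarrow y=g(x)$ on $U$ is then exactly this uniqueness. Continuity of $g$ follows by running the same argument with $\delta$ replaced by any smaller $\varepsilon$, which yields $|g(x)-b|<\varepsilon$ near $a$. Repeating this at any other base point $(x_0,g(x_0))$ gives continuity on all of $I$.

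\emph{Differentiability and the formula.} This is the main step. For $x,x+h\in I$, apply the mean value theorem to $t\mapsto f\big(x+th,\,g(x)+t(g(x+h)-g(x))\big)$ on $[0,1]$. Since $f$ vanishes at both endpoints, this gives
\[
0=h\,\frac{\partial f}{\partial x}(\xi_h)+\big(g(x+h)-g(x)\big)\frac{\partial f}{\partial y}(\xi_h)
\]
for some $\xi_h$ on the segment joining $(x,g(x))$ and $(x+h,g(x+h))$. The segment stays in the convex set $U$, where $\frac{\partial f}{\partial y}\neq0$. Dividing by $h\,\frac{\partial f}{\partial y}(\xi_h)$ and letting $h\to0$, continuity of $g$ forces $\xi_h\to(x,g(x))$, and continuity of the partial derivatives yields the stated formula for $g'(x)$.

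\emph{Regularity.} The formula exhibits $g'$ as a quotient of continuous functions composed with $x\mapsto(x,g(x))$, so $g\in C^1$. Inductively, if $g\in C^j$ with $j<k$, the right-hand side is $C^j$, so $g'\in C^j$ and $g\in C^{j+1}$; hence $g\in C^k$.

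The only delicate point is obtaining continuity of $g$ before differentiating, since the mean-value step needs $\xi_h\to(x,g(x))$; the shrinking-$\varepsilon$ argument above handles it.
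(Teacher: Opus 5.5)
The paper does not prove this statement; it quotes it as a classical result and defers to an external reference. There is therefore no in-paper argument to match. Your argument is a correct, self-contained proof of the two-variable case.

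It is not, however, the contraction-mapping argument your first sentence announces. What you actually do is:
\begin{itemize}
\item use the order structure in $y$ (strict monotonicity of $y\mapsto f(x,y)$ on a square where $\partial f/\partial y\ge c/2$, plus the intermediate value theorem) to get existence and uniqueness of $g(x)$;
\item apply the mean value theorem along the segment from $(x,g(x))$ to $(x+h,g(x+h))$, inside the convex set $U$, to get differentiability and the formula for $g'$;
\item bootstrap that formula to obtain $C^k$ regularity.
\end{itemize}

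The delicate points all check out. Continuity of $g$ at every $x_0\in I$ works because monotonicity holds on the whole square, not just on the vertical line through $a$. The quantity $\frac{\partial f}{\partial y}(\xi_h)$ is nonzero because $U=I\times(b-\delta,b+\delta)$ lies inside that square, given $\eta\le\delta$.

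Compared with the general fixed-point or inverse-function proof, your route is more elementary. It also directly produces a product neighbourhood $I\times J$ with $g\colon I\to J$. That is exactly the form invoked in Step~1 of the proof of Theorem~\ref{thm:poisson}, and your bootstrap yields the $C^\infty$ regularity of $g$ used there. What it gives up is generality: it relies on $y$ being a real scalar, so it does not extend to systems or to Banach spaces, where a contraction argument is genuinely needed.

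You should correct the opening sentence so that the description matches the proof you give.
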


\begin{theorem}\label{thm:poisson}
	Let $s\geq2$ be a fixed integer. Let $(p_n)_n$ and $(q_n)_n$ be two sequences of real numbers in the interval $]0,1[$ satisfying, for each $n$, the condition
	$$ p_n^s + p_n^{s-1}q_n + \cdots + q_n^s = 1. $$
	Let $(X_n)_n$ be a sequence of random variables, where each $X_n$ takes its values in the set $\{0, 1, \dots,sn\}$ and follows a bi$^s$nomial distribution $\mathcal{B}^{(s)}(n, p_n)$. Assume the sequence of expectations converges to a finite, positive limit $\lambda_s$:
	\begin{equation}
		\lim_{n \to \infty} E(X_n) = \lim_{n \to \infty} np_n \frac{(s+1)p_n^s - 1}{p_n - q_n} = \lambda_s > 0.
	\end{equation}
	Then the sequence of random variables $(X_n)_n$ converges in law to a Poisson distribution with parameter $\lambda_s$; in other words, for any integer $k \ge 0$,
	\begin{equation}
		\lim_{n \to \infty} \mathbb{P}(X_n=k) = \lim_{n \to \infty} {n \choose k}_s p_n^k q_n^{sn-k} = \frac{\lambda_s^k}{k!}e^{-\lambda_s}.
	\end{equation}
\end{theorem}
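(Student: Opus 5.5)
The plan is to go back to the explicit mass function and use Lemma~\ref{2}, which gives ${n\choose k}_s\sim n^k/k!$. It then suffices to show that $(np_n)^k\to\lambda_s^k$, that $q_n^{-k}\to1$, and that $q_n^{sn}\to e^{-\lambda_s}$. Multiplying the three limits yields
\[
\lim_{n\to\infty}{n\choose k}_s p_n^kq_n^{sn-k}=\frac{1}{k!}\,\lambda_s^k\,e^{-\lambda_s}.
\]
Since the limits are nonnegative and sum to $1$ over $k$, this pointwise convergence on $\mathbb{N}$ is convergence in law.

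\textbf{Step 1: asymptotics of $(p_n,q_n)$.} I will not use the displayed closed form of $E(X_n)$, which is a $0/0$ expression when $p_n=q_n$. Instead I write
\[
E(X_n)=n\,x_1^{(n)},\qquad x_1^{(n)}=\sum_{l=1}^{s}l\,p_n^lq_n^{s-l}.
\]
Every term is nonnegative, so $x_1^{(n)}=O(1/n)$, which forces $p_nq_n^{s-1}\to0$. I then argue that $q_n$ cannot go to $0$ along a subsequence:
\begin{itemize}
\item If $q_n\to0$ along a subsequence, the normalization $\sum_{l}p_n^lq_n^{s-l}=1$ forces $p_n^s\to1$.
\item Then $x_1^{(n)}\ge s\,p_n^s\to s$ along that subsequence, so $E(X_n)\to\infty$, which is impossible.
\end{itemize}
Hence $q_n$ is bounded away from $0$ and $p_n\to0$. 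The normalization then gives $q_n^s\to1$, so $q_n\to1$. Next I quantify the rate. From $x_1^{(n)}\ge p_nq_n^{s-1}$ we get $np_n=O(1)$, hence $n p_n^l=O(1/n)\to0$ for $l\ge2$. Therefore
\[
n x_1^{(n)}=np_nq_n^{s-1}+o(1),\qquad np_nq_n^{s-1}\to\lambda_s,\qquad np_n\to\lambda_s .
\]

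\textbf{Step 2: the factor $q_n^{sn}$.} The normalization gives
\[
1-q_n^s=\sum_{l=1}^{s}p_n^lq_n^{s-l}=:\varepsilon_n .
\]
By Step 1, $n\varepsilon_n=np_nq_n^{s-1}+o(1)\to\lambda_s$. Writing $q_n^{sn}=(1-\varepsilon_n)^n=\exp\big(n\ln(1-\varepsilon_n)\big)$ and using $\ln(1-\varepsilon)=-\varepsilon+O(\varepsilon^2)$ with $n\varepsilon_n^2\to0$, I get $q_n^{sn}\to e^{-\lambda_s}$. Also $q_n^{-k}\to1$ because $k$ is fixed.

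\textbf{Step 3: assembly.} Write
\[
{n\choose k}_s p_n^kq_n^{sn-k}=\frac{{n\choose k}_s}{n^k}\,(np_n)^k\,q_n^{sn}\,q_n^{-k}.
\]
Apply Lemma~\ref{2} to the first factor and Steps 1 and 2 to the others.

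The main obstacle is Step 1, namely extracting $p_n\to0$ and $np_n\to\lambda_s$ from the hypothesis alone. The hypothesis is stated through the singular closed form and constrains only the coupled pair $(p_n,q_n)$. Replacing it by the positive-term expression $n\sum_l l\,p_n^lq_n^{s-l}$ and ruling out the degenerate branch $q_n\to0$ are the two delicate points. The restriction $s\ge2$ plays no essential role in this argument; it only excludes the classical binomial case, which is already known. As a cross-check, I could instead show that the probability generating function
\[
\Big(1+\sum_{l=1}^s(t^l-1)p_n^lq_n^{s-l}\Big)^n
\]
converges to $e^{\lambda_s(t-1)}$ using the same Step 1 estimates.
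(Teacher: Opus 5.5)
Your proof is correct. Its final assembly, Lemma~\ref{2} times $(np_n)^k$ times $q_n^{sn-k}$, is the same as the paper's, but you handle the two analytic steps differently.

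The paper proceeds in three moves:
\begin{enumerate}
\item It gets $p_n\to0$ from a continuity/compactness argument on the one-die mean $e(p)$.
\item It applies the implicit function theorem at $(0,1)$ to expand $q_n=1-p_n/s-\frac{(s+1)p_n^2}{2s^2}+o(p_n^2)$, and deduces $\ln q_n=-p_n/s+O(p_n^2)$.
\item It obtains $np_n\to\lambda_s$ from the closed-form ratio $\frac{(s+1)p_n^s-1}{p_n-q_n}\to1$.
\end{enumerate}

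You instead work with the positive-term expression $E(X_n)=n\sum_l l\,p_n^lq_n^{s-l}$. This makes the exclusion of the degenerate branch $q_n\to0$ explicit. The paper's compactness step tacitly needs $e$ to extend positively to the boundary point $(p,q)=(1,0)$, where it equals $s$. You then replace the implicit function theorem by the exact identity $q_n^{sn}=(1-\varepsilon_n)^n$ with $\varepsilon_n=\sum_{l\ge1}p_n^lq_n^{s-l}$. The needed fact $n\varepsilon_n\to\lambda_s$ comes from the same estimate $np_n^l\to0$ for $l\ge2$ that gives $np_n\to\lambda_s$.

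What your route buys:
\begin{itemize}
\item It is shorter and fully elementary.
\item It never uses the closed form of $E(X_n)$, which is a $0/0$ expression when $p_n=q_n$.
\item It works verbatim for $s=1$, so the paper's separate remark on the binomial case is unnecessary. That remark is needed because the paper's $g''(0)$ computation fails for $s=1$.
\end{itemize}
The paper's second-order expansion of $q_n$ describes the admissible curve near $(0,1)$ more finely, but the limit only needs first order.

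Your generating-function cross-check is also sound, since $n\sum_{l=1}^s(t^l-1)p_n^lq_n^{s-l}=n(t-1)p_nq_n^{s-1}+O(np_n^2)\to\lambda_s(t-1)$. With the continuity theorem for probability generating functions, it would let you bypass Lemma~\ref{2} and its roots-of-unity formula altogether.
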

\begin{proof}
	The main goal is to evaluate the limit 
	\begin{equation}
		\lim\limits_{n\rightarrow\infty} {n \choose k}_s p_n^k q_n^{sn-k}, \text{ with } p_n^s+p_n^{s-1}q_n+\cdots+q_n^s=1. \label{eq:2}
	\end{equation} 
	
	\emph{Step 0: $p_n\rightarrow0$ and $q_n\rightarrow1$.} The admissible pairs $(p,q)$ form a curve on which the one-die expectation $e(p):=p\frac{(s+1)p^s-1}{p-q}$ is continuous and positive, and $E(X_n)=n\,e(p_n)$. If $p_n$ did not tend to $0$, then along a subsequence $p_n\geq\varepsilon$ for some $\varepsilon>0$, hence $e(p_n)\geq c(\varepsilon)>0$ by continuity and compactness, and $E(X_n)\geq n\,c(\varepsilon)\rightarrow\infty$, contradicting $E(X_n)\rightarrow\lambda_s<\infty$. Therefore $p_n\rightarrow0$ and, by the constraint, $q_n^s+q_n^{s-1}p_n+\cdots\rightarrow1$ forces $q_n\rightarrow1$.
	
	\emph{Step 1: expansion of $q_n$ in terms of $p_n$.} Consider the function of two real variables $f(x,y)=x^{s+1}-y^{s+1}-x+y$, so that the constraint reads $f(p_n,q_n)=0$. We have
	\begin{equation*}
		\dfrac{\partial f}{\partial y}(x,y)=-(s+1)y^{s}+1,\qquad\text{hence}\qquad 	\dfrac{\partial f}{\partial y}(0,1)=-s\neq0.
	\end{equation*} 
	Since in addition $ f(0,1)=0 $, by applying Theorem \ref{4}, there are two open intervals $ I $ and $ J $, with $ 0\in I $ and $ 1\in J $, and a function $ g:I\rightarrow J $ of class $ C^1 $ such that
	\begin{equation*}
		\text{for all }(x,y)\in I \times J, \quad f(x,y)=0\Leftrightarrow y=g(x).
	\end{equation*}
	Moreover, since $ f $ is of class $ C^\infty $, $ g $ is also of class $ C^\infty $ on $ I $. In particular, $g$ admits a Taylor expansion of any order at $0$ (Taylor--Young); at order two,
	\begin{equation}
		g(x)= g(0)+\dfrac{g'(0)}{1!}x+\dfrac{g''(0)}{2!}x^2+o(x^2). \label{eq14}
	\end{equation}
	We already know that $ g(0)=1 $. To calculate $ g'(0) $ and $ g''(0) $, we differentiate twice the identity $f(x,g(x))=0$, that is,
	\begin{equation*}
		x^{s+1} -g(x)^{s+1}-x+g(x)=0.
	\end{equation*}
	We differentiate once and evaluate at $ x=0$ to find
	\begin{equation*}
		(s+1)x^s-(s+1)g'(x)g(x)^{s}-1+g'(x)=0\Rightarrow g'(0)=\dfrac{-1}{s}.
	\end{equation*}
	We differentiate a second time and evaluate at $ x=0 $ to find
	\begin{equation*}
		\begin{split}
			&	(s+1)sx^{s-1}-(s+1) g''(x)g(x)^{s}-(s+1)s g'(x)^2g(x)^{s-1}+g''(x)=0\\&\Rightarrow g''(0)=-\dfrac{s+1}{s^2},
		\end{split}
	\end{equation*}
	where the first term vanishes at $x=0$ precisely because $s\geq2$. Thus, since $p_n\rightarrow0$ (Step~0) allows us to evaluate \eqref{eq14} along the sequence, we get
	\begin{equation}
		q_n=1-\frac{p_n}{s}-\frac{(s+1)p_n^2}{2s^2}+o(p_n^2). \label{eq:4}
	\end{equation}
	
	\emph{Step 2: $np_n\rightarrow\lambda_s$.} As $p_n\rightarrow0$ and $q_n\rightarrow1$, we have
	\begin{equation*}
		\frac{(s+1)p_n^s-1}{p_n-q_n}\longrightarrow\frac{-1}{0-1}=1,
	\end{equation*}
	so that $E(X_n)=np_n\frac{(s+1)p_n^s-1}{p_n-q_n}\rightarrow\lambda_s$ yields $np_n\rightarrow\lambda_s$, i.e.\ $p_n=\frac{\lambda_s}{n}\big(1+o(1)\big)$.
	
	\emph{Step 3: $q_n^{sn-k}\rightarrow e^{-\lambda_s}$.} By \eqref{eq:4}, $\ln q_n=-\frac{p_n}{s}+O(p_n^2)$, hence
	\begin{equation*}
		(sn-k)\ln q_n=-np_n+O\big(np_n^2\big)+O(p_n)\longrightarrow-\lambda_s,
	\end{equation*}
	since $np_n^2=(np_n)\,p_n\rightarrow\lambda_s\cdot0=0$; therefore $q_n^{sn-k}\rightarrow e^{-\lambda_s}$.
	
	\emph{Step 4: conclusion.} Writing
	\begin{equation*}
		{n \choose k}_s p_n^k q_n^{sn-k}=\frac{{n \choose k}_s}{n^k}\,(np_n)^k\, q_n^{sn-k},
	\end{equation*}
	Lemma \ref{2} together with Steps 2 and 3 gives the convergence to the Poisson distribution:
	\begin{equation*}
		\lim\limits_{n\rightarrow +\infty} { n \choose k}_s p_n^k q_n^{sn-k}  =\frac{1}{k!}\,\lambda_s^k\, e^{-\lambda_s} .
	\end{equation*}  
\end{proof}

\begin{remark}
	For $s=1$, the constraint gives exactly $q_n=1-p_n$ (and $g''(0)=0$, so the expansion \eqref{eq:4} does not hold as stated); the statement then reduces to the classical convergence of the binomial distribution to the Poisson distribution, and Steps~2--4 apply verbatim.
\end{remark}

The Poisson distribution $  \mathcal{P}(\lambda_s) $ is the limiting distribution of the bi$^{s}$nomial distribution as $ n $ tends to infinity. We note that the expectation and the variance of the bi$^{s}$nomial distribution converge towards those of the Poisson distribution as $n$ tends to infinity: indeed, $V(X_n)=np_nq_n\frac{1-(s+1)^2(p_nq_n)^s}{(p_n-q_n)^2}\rightarrow\lambda_s$, in accordance with $E=V=\lambda$ for the Poisson distribution.

\section{Going to a Normal Distribution}

In the limiting case where the number of observations $n$ becomes infinite while the success probability $p$ is fixed (so that $np\gg1$), the binomial distribution is approximated by a normal distribution having the same expectation $ \mu $ and the same variance $ \sigma^2 $. We apply this approach to the bi$^{s}$nomial distribution and show that it converges to the normal distribution.

\begin{theorem}\label{thm:clt}
	Let $ X\sim \mathcal{B}^{(s)}(n,p) $ be a bi$^{s}$nomial random variable with parameters $ n $ and $ p $, with $p\neq q$, and set
	$$ \mu_s=E(X)=np\frac{(s+1)p^{s}-1}{(p-q)},\qquad \sigma^2_s=V(X)=npq\frac{ 1-(s+1)^2(pq)^s}{(p-q)^2}. $$
	Then the standardized variable $ Z=\frac{X-\mu_s}{\sigma_s} $ converges in distribution, as $n\rightarrow\infty$, to the standard normal distribution $N(0,1)$; in other words, $X$ has, for large $n$, an approximate normal distribution with mean $\mu_s$ and variance $\sigma_s^2$.
\end{theorem}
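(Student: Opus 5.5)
Since $X$ is a sum $X=X_1+\cdots+X_n$ of $n$ i.i.d.\ $s$-Bernoulli variables $X_i\sim\mathcal{B}^{(s)}(p)$ with fixed $p$, the plan is to prove the statement through the cumulant generating function. This uses the linearity $\kappa_r(X)=n\,\kappa_r(X_1)$ from Theorem~\ref{thm:cumulants}. The limiting object is then obvious, and the only real work is controlling a remainder. Write $\kappa_r=\kappa_r(X_1)$, so that $\mu_s=n\kappa_1$ and $\sigma_s^2=n\kappa_2$ by the corollary on expectation and variance.

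\textbf{Step 1: nondegeneracy.} I first check that $\kappa_2=V(X_1)>0$. Since $p,q\in\,]0,1[$, every face $k\in\{0,\dots,s\}$ has positive probability $p^kq^{s-k}$, and $s\geq1$. So $X_1$ is not almost surely constant, and $V(X_1)>0$ holds even though the closed form of Corollary~\ref{cor:EV-sB} does not display it. Hence $\sigma_s=\sqrt{n\kappa_2}\to\infty$.

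\textbf{Step 2: cumulant expansion.} The moment generating function $M_{X_1}(t)=\sum_{k=0}^s p^kq^{s-k}e^{tk}$ is entire and equals $1$ at $t=0$. Therefore $K_{X_1}=\ln M_{X_1}$ is analytic on a disc $|t|<\rho$, and there it is $\sum_{r\geq1}\kappa_r t^r/r!$, with $|K_{X_1}(t)-\kappa_1t-\kappa_2t^2/2|\leq C|t|^3$ for $|t|\leq\rho/2$. For the standardized variable we have
\begin{equation*}
\ln E\big(e^{uZ}\big)=n\,K_{X_1}\!\Big(\frac{u}{\sqrt{n\kappa_2}}\Big)-\frac{u\,n\kappa_1}{\sqrt{n\kappa_2}}.
\end{equation*}
Fix $u$ real. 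Once $n$ is large, $t=u/\sqrt{n\kappa_2}$ lies in the disc, and the linear terms cancel exactly. The quadratic term gives $n\kappa_2t^2/2=u^2/2$. The remainder is bounded by $nC|u|^3(n\kappa_2)^{-3/2}=O(n^{-1/2})$, which matches the paper's observation that $\gamma_1=O(n^{-1/2})$. Hence $E(e^{uZ})\to e^{u^2/2}$ for every real $u$.

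\textbf{Step 3: conclusion.} Pointwise convergence of moment generating functions on a neighborhood of $0$ to that of $N(0,1)$ implies convergence in distribution, by Curtiss' continuity theorem. Alternatively, I can run the same computation with $u$ replaced by $iu$, using characteristic functions and the principal logarithm, which is legitimate near $t=0$; Lévy's continuity theorem then applies. Either way $Z\to N(0,1)$ in law.

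The only delicate point is the uniform remainder bound in Step 2, namely that $\ln M_{X_1}$ is well defined and analytic near $0$ with a third-order Taylor bound. This is immediate because $M_{X_1}$ is a finite exponential sum with $M_{X_1}(0)=1$.

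As a shortcut, one could instead invoke the Lindeberg--Lévy central limit theorem directly: the $X_i$ are i.i.d.\ and bounded with positive variance. The cumulant route is preferable here because it reuses Theorem~\ref{thm:cumulants} and yields the $O(n^{-1/2})$ rate.
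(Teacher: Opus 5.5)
Your proof is correct and is essentially the paper's argument: both establish $E(e^{uZ})\to e^{u^2/2}$ for every real $u$ from $M_X=M_{X_1}^n$ and conclude with Curtiss' continuity theorem. The only difference is that you Taylor-expand $K_{X_1}=\ln M_{X_1}$ to second order with a cubic remainder, whereas the paper expands the exponential inside $M_{X_1}$ to fourth order with a Lagrange remainder and then uses $(1+a_n/n)^n\to e^{a}$; your explicit check that $V(X_1)>0$ also justifies the claim $\sigma_s\to\infty$, which the paper uses without comment.
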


\begin{proof}
	\underline{\textbf{The MGF method}}\\
	
	The moment generating function of the random variable $ X $ evaluates to 
	
	$ M_X(t)=\sum\limits_{k=0}^{s n}e^{t k}{n\choose k}_s p^k q^{sn-k}=\left( \sum\limits_{k=0}^{s} \left( e^t p\right) ^k q^{s-k}\right)^n $. 
	
	Let $ Z=\frac{1}{\sigma_s}\left( X-E(X) \right)  $. Below we derive the mgf of $ Z $, which is given by
	\begin{equation*}
		M_Z(t)=E(e^{tZ})=\left( \sum\limits_{k=0}^{s} \exp\left( \left( k-\frac{E(X)}{n}\right) \frac{t}{\sigma_s}\right)  p^k q^{s-k}\right)^n.
	\end{equation*}
	
	The Taylor expansion with Lagrange remainder of $ \exp\left( \left( k-\frac{E(X)}{n}\right) \frac{t}{\sigma_s}\right) $ gives
	\begin{equation*}
		\begin{split}
			\exp\left( \left( k-\frac{E(X)}{n}\right) \frac{t}{\sigma_s}\right)=&1+\left( k-\frac{E(X)}{n}\right) \frac{t}{\sigma_s}+\frac{1}{2!}\left( k-\frac{E(X)}{n}\right)^2\left(\frac{t}{\sigma_s}\right)^2+\\&\frac{1}{3!}\left( k-\frac{E(X)}{n}\right)^3 \left(\frac{t}{\sigma_s}\right)^3+\frac{1}{4!}\left( k-\frac{E(X)}{n }\right)^4\left( \frac{t}{\sigma_s}\right) ^4e^{\psi_k(n)},
		\end{split}
	\end{equation*}
	where $ \psi_k(n) $ is a number between 0 and $ \left( k-\frac{E(X)}{n }\right) \frac{t}{\sigma_s} $. Since $k$ ranges over the finite set $\{0,\dots,s\}$, $\frac{E(X)}{n}=E(X_1)$ is bounded and $\sigma_s\rightarrow\infty$, we have $ \max_{0\leq k\leq s}|\psi_k(n)|\rightarrow0 $ as $ n\rightarrow\infty $, uniformly for $t$ in compact sets.
	Now substituting this equation in the last expression for $ M_Z(t) $, we have 
	\begin{equation}
		\begin{split}
			M_Z(t)&=\Biggl(\sum\limits_{k=0}^{s} p^k q^{s-k}+\sum\limits_{k=0}^{s}\left( k-\frac{E(X)}{n }\right)p^k q^{s-k} \left( \frac{t}{\sigma_s}\right) +\frac{1}{2!}\sum\limits_{k=0}^{s}\left( k-\frac{E(X)}{n }\right)^2 p^k q^{s-k} \left( \frac{t}{\sigma_s}\right) ^2\\&+\frac{1}{3!}\sum\limits_{k=0}^{s}\left( k-\frac{E(X)}{n }\right)^3 p^k q^{s-k} \left( \frac{t}{\sigma_s}\right) ^3+\frac{1}{4!}\sum\limits_{k=0}^{s}\left( k-\frac{E(X)}{n }\right)^4p^kq^{s-k}\left( \frac{t}{\sigma_s}\right) ^4e^{\psi_k(n)}\Biggr)^n,
		\end{split}
		\label{eq24}
	\end{equation}
	where $ \sum\limits_{k=0}^{s} p^k q^{s-k}=1 $ and $\sum\limits_{k=0}^{s}\left( k-\frac{E(X)}{n }\right)p^k q^{s-k} =0$, since 
	$$ \sum\limits_{k=0}^{s} kp^k q^{s-k}-\left( \sum\limits_{k=0}^{s}p^k q^{s-k}\right) \frac{E(X)}{n}= E(X_1)- \frac{E(X)}{n}=E(X_1)-E(X_1)=0.$$
	Moreover, $\sum\limits_{k=0}^{s}\left( k-\frac{E(X)}{n }\right)^2\left( \frac{1}{\sigma_s}\right) ^2 p^k q^{s-k} = \tfrac{1}{n}$, since, using $\frac{E(X)}{n}=E(X_1)$,
	\begin{equation*}
		\begin{split}
			\sum\limits_{k=0}^{s}\left( k-\frac{E(X)}{n}\right)^2 p^k q^{s-k}&=\sum\limits_{k=0}^{s}\left( k^2-2kE(X_1)+E(X_1)^2\right) p^k q^{s-k}\\&=\sum\limits_{k=0}^{s} k^2p^k q^{s-k}-2E(X_1)\sum\limits_{k=0}^{s}kp^k q^{s-k}+E(X_1)^2\sum\limits_{k=0}^{s}p^k q^{s-k}\\&=E(X_1^2)-2E(X_1)^2+E(X_1)^2\\&=E(X_1^2)-E(X_1)^2\\&= V(X_1)=\frac{V(X)}{n}=\frac{\sigma_s^2}{n}.		
		\end{split}
	\end{equation*}
	Writing $ \sigma_s^2=n\Big( \sum\limits_{k=0}^{s}k^2p^kq^{s-k} -E(X_1)^2 \Big)=nV(X_1)$ and substituting these identities in the last expression for $ M_Z(t) $ in \eqref{eq24}, we have
	\begin{equation*}
		\begin{split}
			M_Z(t)&= \Biggl(1 +\frac{t^2}{2n}+\frac{1}{3!}  \left( \frac{\sum\limits_{k=0}^{s}\left( k-E(X_1)\right)^3p^k q^{s-k}}{n^\frac{3}{2}\,V(X_1)^{\frac{3}{2}}}\right)t^3+\frac{1}{4!}\left( \frac{\sum\limits_{k=0}^{s}\left( k-E(X_1)\right)^4p^k q^{s-k}\,e^{\psi_k(n)}}{n^2\,V(X_1)^2}\right)t^4\Biggr)^n.
		\end{split}
	\end{equation*}
	
	The above equation may be written as 
	\begin{equation*}
		M_Z(t)= \Biggl(1 +\frac{t^2}{2n}+\frac{\Psi(n)}{n}\Biggr)^n,
	\end{equation*}
	where
	\begin{equation*}
		\begin{split}
			\Psi(n)&= \frac{1}{3!}  \left( \frac{\sum\limits_{k=0}^{s}\left( k-E(X_1)\right)^3p^k q^{s-k}}{\sqrt{n}\,V(X_1)^{\frac{3}{2}}}\right)t^3+\frac{1}{4!}\left( \frac{\sum\limits_{k=0}^{s}\left( k-E(X_1)\right)^4p^k q^{s-k}\,e^{\psi_k(n)}}{n\,V(X_1)^2}\right)t^4.
		\end{split}
	\end{equation*}
	
	The numerators above are central moments of a distribution with finite support, hence bounded; since moreover $ \max_k|\psi_k(n)|\rightarrow0 $ as $ n\rightarrow\infty $, it follows that $ \lim\limits_{n\rightarrow\infty}\Psi(n)=0 $ for every fixed value of $ t $. Using the standard fact that $\big(1+\frac{a_n}{n}\big)^n\rightarrow e^{a}$ whenever $a_n\rightarrow a$, we obtain
	\begin{equation*}
		\lim\limits_{n\rightarrow \infty} M_Z(t)=e^{t^2/2}
	\end{equation*}
	for all real values of $ t $, which is the moment generating function of the standard normal distribution $N(0,1)$. By the continuity theorem for moment generating functions (Curtiss \cite{Cur}), $Z$ converges in distribution to $N(0,1)$.
\end{proof}

\begin{remark}
	For $p=q$, the same conclusion holds with $\mu_s=sn/2$ and $\sigma_s^2=sn(s+2)/12$; indeed, $X$ being a sum of $n$ i.i.d.\ bounded random variables, the convergence also follows from the classical L\'evy--Lindeberg central limit theorem. The value added by Theorem~\ref{thm:clt} lies in the explicit closed forms of $\mu_s$ and $\sigma_s^2$ in the parametrization $(p,q)$, and in the self-contained proof by generating functions; the rate $\gamma_2-3=O(n^{-1})$ obtained from the cumulants in Section~4.3 quantifies this convergence at the level of the fourth moment.
\end{remark}

We can conclude that, as $ n\rightarrow\infty $, the random variable $ Z=\frac{X-E(X)}{\sigma_s} $ has the standard normal as its limiting distribution; or equivalently, that the bi$^{s}$nomial random variable $ X $ has, for large $ n $, an approximate normal distribution with mean $ \mu_s=np\frac{(s+1)p^{s}-1}{(p-q)} $ and variance $ \sigma_s^2=npq\frac{ 1-(s+1)^2(pq)^s}{(p-q)^2} . $

\AuthorAddresses
\end{document}